\numberwithin{equation}{section}
\newtheorem{thm}{Theorem}[section]
\newtheorem{lem}[thm]{Lemma}
\newtheorem{dfn}[thm]{Definition}
\def\R{\mathbb{R}}
\def\Ri{\mathbb{R}\cup\{+\infty\}}
\def\N{\mathbb{N}}
\def\eps{\varepsilon}
\def\epi{\mathrm{epi\,}}
\def\be{\begin{equation}}
\def\ee{\end{equation}}
\def\ba{\begin{array}}
\def\ea{\end{array}}
\DeclareMathOperator{\diam}{diam}
\begin{document}

\title{Simultaneous perturbed minimization of a convergent sequence of functions\thanks{The study  is supported by  the European Union-NextGenerationEU, through the National Recovery and Resilience Plan of the Republic of Bulgaria, SUMMIT project BG-RRP-2.004-0008-C01.}}
\author{Hristina Topalova\thanks{Faculty of Mathematics and Informatics, Sofia University,   5, James Bourchier Blvd, 1164 Sofia, Bulgaria, e-mail: htopalova@fmi.uni-sofia.bg}\ \ and Nadia Zlateva\thanks{Faculty of Mathematics and Informatics, Sofia University,   5, James Bourchier Blvd, 1164 Sofia, Bulgaria, e-mail: zlateva@fmi.uni-sofia.bg}}
\date{ }
\maketitle
\begin{abstract}
	We establish a general method for simultaneously perturbing a convergent sequence of functions in such a way that the sequence of strong minima of the perturbed functions tend to the strong minimum of
	their limit.\\
 \textbf{2020 Mathematics Subject Classification:} 46N10, 49J45, 90C26\\
 \textbf{Key words:}: perturbed minimization, convergence of minima, variational principle, perturbation space
\end{abstract}

\section{Introduction}\label{sec:intro}
	One of the most challenging --- and thus rewarding in terms of new development --- topics in Optimization, Variational Analysis~\cite{rocwets,Tbook} and related areas, is the behaviour of the solutions, or approximate solutions, of a given type of problems under small perturbation, see e.g. \cite{Danzig}. In this, the value of the problem usually behaves way better, but what concerns the "argmin", all kind of "jumps" may occur.
	
	One approach of dealing with this issue is to perturb all the parameterized problems together in such a way that all of them become well-posed. Probably the first general result of this kind --- extending the theory of so called variational principles --- is due to P. G. Georgiev~\cite{pando}. This technique has been developed further since, e.g. \cite{dev-pro,vesel}.
	
	However, everywhere so far some convexity is assumed. In this work we focus on what would be the first step to general parametric perturbation method with a compact set of parameters. Namely, we take the parametric set to be $\{1,2,\ldots,\infty\}$ --- the one point compactification of the naturals --- which, of course, leads to a convergent sequence of functions. The principle merit of this work is that we impose no convexity assumptions.
	
	We will now present and explain our contribution. We work on a complete metric space $(X,d)$. Let us recall   that a function $f:X\to \Ri$  is called \emph{proper} when it is not identically equal to infinity and  \emph{lower semicontinuous} whenever its epigraph $\epi f:=\{ (x,r)\in X\times \R:f(x)\le r\}$ is a closed set. For  a proper   and bounded below function $f:X\to\Ri$ we set
	$$
		f_\varepsilon (x) := \inf\{f(y):\ d(y,x) \le \eps\}.
	$$	

	\begin{thm}\label{thm:minconv}
		Let $(X,d)$ be a complete metric space and let   $(\cal{P},\|\cdot\|_{\cal{P}})$ be a perturbation space on $X$.
		
		Let the functions  $f_n:X\to \Ri$, $n=1\dots\infty$, be proper, lower semicontinuous and bounded below, and such that
		\begin{equation}
			\label{eq:cond-1}
			\lim_{n\to\infty} f_n(x) = f_\infty(x),\quad\forall x\in X,
		\end{equation}
		\begin{equation}
			\label{eq:cond-2}
			\forall\varepsilon>0\ \exists N\in\mathbb{N}:\ f_n \ge (f_\infty)_\varepsilon - \varepsilon,\quad\forall n \ge N.
		\end{equation}
		Then there exists a dense $G_\delta$ subset $U$ of $\cal{P}$ such that for each $g\in U$ the functions $f_n+g$, where $n\ge 1$, attain strong minimum on $X$ at $x_n$ and
		\begin{equation}
			\label{eq:continuity-min}
			\lim_{n\to\infty} x_n = x_\infty,\quad\lim_{n\to\infty} f_n(x_n) = f_\infty(x_\infty).
		\end{equation}
  \end{thm}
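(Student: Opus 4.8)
The plan is to combine a Baire-category (variational-principle) argument, which forces all the perturbed problems $f_n+g$ to be well-posed at once, with an epi-convergence reading of hypotheses \eqref{eq:cond-1}--\eqref{eq:cond-2}, which then pins the minimizers down. First I would construct the $G_\delta$ set. For each $n\in\{1,2,\dots,\infty\}$ and each $k\in\N$ set
\[
A_{n,k} := \bigl\{ g\in\cal{P} : \exists\,\delta>0,\ \diam\{x\in X : (f_n+g)(x)\le \inf_X(f_n+g)+\delta\} < 1/k \bigr\}.
\]
I would check that each $A_{n,k}$ is open (a small $\|\cdot\|_{\cal{P}}$-, hence small uniform, perturbation shifts the value and shrinks the relevant sublevel set only slightly, so the witness $\delta$ survives) and dense (given any $g$, add a small bump from $\cal{P}$ concentrated at an approximate minimizer to create a sharp well; this is precisely where the axioms of a perturbation space are used). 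Since the index set is countable, $U:=\bigcap_{n}\bigcap_{k}A_{n,k}$ is a dense $G_\delta$ by Baire's theorem. For $g\in U$ and each $n$, the closed sublevel sets of $f_n+g$ are nonempty (bounded below), nested, with diameters tending to $0$; completeness of $X$ and lower semicontinuity yield a unique point $x_n$ at which $f_n+g$ attains a strong minimum.

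Next I would record that \eqref{eq:cond-1}--\eqref{eq:cond-2} give $\elim_n f_n = f_\infty$. The inequality $\elimsup_n f_n\le f_\infty$ is immediate from the pointwise convergence \eqref{eq:cond-1} along constant sequences. For $\eliminf_n f_n\ge f_\infty$, given $x_j\to x$ and $\eps>0$, condition \eqref{eq:cond-2} gives $f_n(x_j)\ge (f_\infty)_\eps(x_j)-\eps$ for large $n$, while $(f_\infty)_\eps(x_j)\ge (f_\infty)_{2\eps}(x)$ once $x_j$ is $\eps$-close to $x$; letting the indices pass to the limit and then $\eps\downarrow0$, together with $\lim_{\eps\to0}(f_\infty)_\eps(x)=f_\infty(x)$ (a consequence of lower semicontinuity), gives $\liminf_j f_n(x_j)\ge f_\infty(x)$. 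Since $g$ is continuous, the same holds with $f_n,f_\infty$ replaced by $f_n+g,f_\infty+g$.

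Now fix $g\in U$ and write $m_n:=\inf_X(f_n+g)=(f_n+g)(x_n)$ and $m_\infty:=\inf_X(f_\infty+g)$. From \eqref{eq:cond-1}, $m_n\le f_n(x_\infty)+g(x_\infty)\to m_\infty$, so $\limsup_n m_n\le m_\infty$. The crucial point is $x_n\to x_\infty$. I would \emph{not} estimate $(f_\infty+g)(x_n)$ directly, since $f_\infty(x_n)$ may lie far above $f_n(x_n)$; instead I would exhibit a nearby genuine near-minimizer of $f_\infty+g$. By \eqref{eq:cond-2}, for $n$ large there is $y_n$ with $d(y_n,x_n)\le\eps$ and $f_\infty(y_n)\le f_n(x_n)+2\eps$, whence
\[
(f_\infty+g)(y_n)\le m_n+2\eps+\bigl(g(y_n)-g(x_n)\bigr).
\]
Controlling the last difference by the (uniform) continuity of the perturbations in $\cal{P}$, through a modulus $\omega_g(\eps)$, makes $y_n$ an $\bigl(\omega_g(\eps)+2\eps+o(1)\bigr)$-minimizer of the well-posed problem $f_\infty+g$. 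Its strong-minimum modulus then forces $y_n\to x_\infty$, and $d(x_n,y_n)\le\eps$ yields $x_n\to x_\infty$, made rigorous by choosing $\eps$ against the well-posedness modulus (take $n\to\infty$ first, then $\eps\downarrow0$). Finally the $\eliminf$ inequality gives $\liminf_n m_n\ge m_\infty$, so $m_n\to m_\infty$, and then $f_n(x_n)=m_n-g(x_n)\to m_\infty-g(x_\infty)=f_\infty(x_\infty)$, which is \eqref{eq:continuity-min}.

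I expect the main obstacle to be exactly the transfer in the third paragraph: converting an (exact) minimizer $x_n$ of $f_n+g$ into an approximate minimizer of $f_\infty+g$. Condition \eqref{eq:cond-2} compares $f_n$ only with the $\eps$-infimal envelope $(f_\infty)_\eps$, so $x_n$ itself need not minimize the limit problem well; the $\eps$-ball slack must be absorbed by the continuity of the perturbation (uniformly over the locations $x_n$, which is where the structure of $\cal{P}$ genuinely enters) and then removed in the limit. Coordinating the order of limits with the well-posedness modulus of $f_\infty+g$ is the delicate bookkeeping, whereas the openness/density of the $A_{n,k}$ is the routine variational-principle machinery.
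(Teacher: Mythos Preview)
Your proposal is correct and follows essentially the same route as the paper. The construction of the dense $G_\delta$ set via the sets $A_{n,k}$ is exactly the paper's Theorem~3.2 applied to each $f_n$ and intersected over $n$; your key transfer step (produce $y_n$ with $d(y_n,x_n)\le\eps$ and $f_\infty(y_n)\le f_n(x_n)+2\eps$, then use uniform continuity of $g$) is the pointwise version of the paper's Lemma~2.3 combined with Lemma~2.5, the latter showing that $(f_n+g)_{n\ge1}$ again satisfy \eqref{eq:cond-2} so that the ``master'' Lemma~2.4 applies directly to the perturbed sequence. The only organizational difference is that the paper first proves $\inf f_n\to\inf f_\infty$ (Lemma~2.2) independently of the minimizers, whereas you recover $\liminf_n m_n\ge m_\infty$ a posteriori from $x_n\to x_\infty$ via the epi-liminf inequality; both orderings work.
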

  Perturbation spaces are defined in Section~\ref{sec:abstract}. Roughly speaking they allow perturbing a proper lower semicontinuous and bounded below function so that the perturbed function attains minimum. The idea is to do this simultaneously for all the functions  $(f_n)_{n\ge1}$ and then use the conditions \eqref{eq:cond-1} and \eqref{eq:cond-2} to derive the convergence of the corresponding minima.

  To give an idea how the above abstract theorem works, an immediate corollary to it is as follows. Let $(E,\|\cdot\|)$ be a Banach space with norm $\|\cdot\|$ which is Fr\'echet differentiable away from the origin. If $(f_n)_{n\ge1}$ satisfy \eqref{eq:cond-1} and \eqref{eq:cond-2} then for each $\eps > 0$ there is a Fr\'echet smooth function $g$ such that $|g(x)| < \eps$ and $\|g'(x)\| < \eps$ for all $x\in E$ and each $f_n+g$ has an unique minimum. Moreover, the sequence of minima converge as in \eqref{eq:continuity-min}.

  The reason behind the chosen convergence of a sequence of functions and especially \eqref{eq:cond-2} (as   \eqref{eq:cond-1} is just the pointwise convergence)  is a bit harder to explain. Note that uniform convergence satisfies easily \eqref{eq:cond-1} and \eqref{eq:cond-2}, so a reader who can do with uniform convergence is encouraged to think in those terms, which will simplify quite a few the considerations.

  In the context of Optimization --- thus Variational Analysis --- however, the uniform convergence can often be inadequate. The reason is that a sequence of indicator functions  of some sets $A_n$ (the indicator function to a set $A\subset X$ is equal to zero on $A$ and infinity outside) converges to the indicator function of $A$ if and only if $A_n$'s eventually coincide with $A$. As indicator functions are important in Optimization to accommodate the constraint sets, the standard uniform convergence will mostly not do. Note in this regard that, for example, the indicator functions of the intervals $[0,1/n]$ will indeed converge to the indicator function of $\{0\}$ in the sense of \eqref{eq:cond-1} and \eqref{eq:cond-2}.

  In the literature various geometrical (in the sense of using the epigraphs) notions have been developed to accommodate for the above situation. To keep it short we refer only to the seminal monograph of T. Rockafellar and  R. J.-B. Wets~\cite{rocwets}, but, of course, the latter sums up lots of previous development. For the current state of the art we address to the monograph of L. Thibault~\cite{Tbook}.  In this respect, it is easy to check that epigraph absorption (in other words, whatever metric is chosen in $X\times\mathbb{R}$, for each $\eps>0$ the epigraphs of $f_n$ are eventually contained in the $\eps$-enlargement of the epigraph of $f_\infty$; that is, each point of $\epi f_n$ is within $\eps$-distance to $\epi f_\infty$) imply the analytical condition \eqref{eq:cond-2}. Some uniformity, however, is required in that what the so called  {epi-convergence} will not do, see~\cite{rocwets}. Indeed, a simple example involving only piecewise linear convex functions on $\mathbb{R}$, illustrates how one may have $f_n$'s epi-convergent to $f_\infty$, but
 $\inf f_n \not\to  \inf f_\infty$, see \cite[p. 263]{rocwets}. The functions are $f_n(x) := \max\{ -1,x/n\}$ for $n\in\mathbb{N}$ and $x\in\R$, so $f_\infty \equiv 0$. It is easy to check that these cannot satisfy the conclusion of Theorem~\ref{thm:minconv}. Why we cannot just add the condition $\inf f_n \to  \inf f_\infty$ is because it is not preserved under addition of another (perturbation) function and our method crucially depends on something like Lemma~\ref{lem:can-add-uc} below.

The article is organized as follows. In the next section we study the properties of the convergence \eqref{eq:cond-1}, \eqref{eq:cond-2} that we need. In Section~\ref{sec:abstract} we present the version of the perturbation space and the corresponding perturbation method we use. After that completing the proof of Theorem~\ref{thm:minconv} in Section~\ref{sec:proof} takes a few lines.

\section{Lemmata}
	\label{sec:lem}	
	Here we study some properties of the convergence \eqref{eq:cond-1} and \eqref{eq:cond-2} that we will need.
	
	For a function $f:X\to \Ri$ and $\eps\ge 0$  we consider the sets of $\eps$-minima of $f$:
	$$
		\Omega _f (\eps)=\left\{ x\in X: f(x)\le \inf _X f+\eps\right\}.
	$$

	\begin{lem}
		\label{lem:lem-0}
		 Let $(X,d)$ be a metric space and let $f: X\to\Ri$ be proper and bounded below. Then for each $\delta > 0$
		\begin{equation}
			\label{eq:inf-eps-invar}
			\inf f_\delta = \inf f.
		\end{equation}
		Also, for each $0<\mu<\eps$
		\begin{equation}
			\label{eq:om-mu-eps}
			\forall x\in \Omega_{f_\delta}(\mu)\Rightarrow d(x,\Omega_f(\eps)) \le \delta.
		\end{equation}
	\end{lem}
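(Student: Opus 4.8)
The plan is to dispatch \eqref{eq:inf-eps-invar} by two easy inequalities and then read off \eqref{eq:om-mu-eps} directly from the resulting description of the $\mu$-minima of $f_\delta$, the only genuine care being that the infimum defining $f_\delta$ need not be attained.

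For \eqref{eq:inf-eps-invar}, I would first note that the point $x$ itself is admissible in the infimum defining $f_\delta(x)$, since $d(x,x)=0\le\delta$; hence $f_\delta(x)\le f(x)$ for every $x$, and taking infima gives $\inf f_\delta\le\inf f$. Conversely, every $y$ competing in the definition of $f_\delta(x)$ satisfies $f(y)\ge\inf f$, so $f_\delta(x)\ge\inf f$ for all $x$, whence $\inf f_\delta\ge\inf f$. (This also shows $f_\delta$ is bounded below, and it is proper because $f_\delta\le f$ and $f$ is proper.) Combining the two inequalities yields \eqref{eq:inf-eps-invar}.

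For \eqref{eq:om-mu-eps}, I would use \eqref{eq:inf-eps-invar} to rewrite the membership $x\in\Omega_{f_\delta}(\mu)$ as
\[
	f_\delta(x)=\inf\{f(y):\ d(y,x)\le\delta\}\le\inf f_\delta+\mu=\inf f+\mu.
\]
Since $\mu<\eps$, the slack $\eta:=\eps-\mu$ is strictly positive, so by the definition of the infimum there exists $y$ with $d(y,x)\le\delta$ and $f(y)<f_\delta(x)+\eta\le\inf f+\mu+\eta=\inf f+\eps$. Thus $y\in\Omega_f(\eps)$ while $d(x,y)\le\delta$, giving $d(x,\Omega_f(\eps))\le d(x,y)\le\delta$, which is \eqref{eq:om-mu-eps}.

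The argument is essentially routine; the single point that requires attention is that the infimum in the definition of $f_\delta$ may fail to be attained, so one cannot simply select a minimizer of $f$ in the ball around $x$. The remedy is to pass to an approximate minimizer and to absorb the approximation error into the positive gap $\eps-\mu$; this is precisely why the statement assumes the strict inequality $\mu<\eps$ rather than $\mu\le\eps$.
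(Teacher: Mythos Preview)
Your proof is correct and follows essentially the same route as the paper: both establish \eqref{eq:inf-eps-invar} via the two trivial inequalities $f_\delta\le f$ and $f_\delta\ge\inf f$, and both derive \eqref{eq:om-mu-eps} by using the slack $\eps-\mu>0$ to extract an approximate minimizer $y$ in the $\delta$-ball around $x$ that lands in $\Omega_f(\eps)$. Your explicit remark on why the strict inequality $\mu<\eps$ is needed (to absorb the approximation error when the infimum defining $f_\delta$ is not attained) is a helpful addition not spelled out in the paper.
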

	\begin{proof}
		Because $f\ge f_\delta$, it is clear that $\inf f \ge \inf f_\delta$. On the other hand, for any $x\in X$ by definition $f_\delta(x) = \inf\{f(y):\ d(y,x)\le\delta\} \ge \inf\{f(y):\ y\in X\} = \inf f$, so $\inf f_\delta \ge \inf f$, and \eqref{eq:inf-eps-invar} is verified.
		
		Let now $x\in\Omega_{f_\delta}(\mu)$, that is, $f_\delta (x) \le \inf f_\delta + \mu$. By \eqref{eq:inf-eps-invar}
		$$
			f_\delta (x) \le \inf f + \mu.
		$$
		By the definition of $f_\delta$, there is $y$ such that $d(y,x)\le\delta$ and $f(y) \le f_\delta(x) + \eps-\mu$. The above implies
		$$
			f(y) \le \inf f + \eps,
		$$
		that is, $y\in\Omega_f(\eps)$.
	\end{proof}
	\begin{lem}
		\label{lem:lim-inf}
		Let $(X,d)$ be a metric space. If the  proper and bounded below functions $(f_n)_{n\ge 1}$ satisfy \eqref{eq:cond-1} and \eqref{eq:cond-2} then
		\begin{equation}
			\label{eq:lim-inf}
			\lim_{n\to\infty} (\inf f_n) = \inf f_\infty.
		\end{equation}
	\end{lem}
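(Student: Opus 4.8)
The plan is to establish the equality \eqref{eq:lim-inf} by proving the two one-sided estimates
\[
\limsup_{n\to\infty}(\inf f_n) \le \inf f_\infty \le \liminf_{n\to\infty}(\inf f_n),
\]
where the upper estimate rests on the pointwise convergence \eqref{eq:cond-1} and the lower estimate on the uniform-type condition \eqref{eq:cond-2} combined with the infimum-invariance \eqref{eq:inf-eps-invar} from Lemma~\ref{lem:lem-0}. Once both are in hand the two outer terms coincide, forcing the limit to exist and to equal $\inf f_\infty$.

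For the upper estimate I would fix an arbitrary $x\in X$ with $f_\infty(x)<+\infty$ together with an arbitrary $\eta>0$. By \eqref{eq:cond-1} there is $N$ with $f_n(x)\le f_\infty(x)+\eta$ for all $n\ge N$, hence $\inf f_n\le f_n(x)\le f_\infty(x)+\eta$ for such $n$. Passing to the $\limsup$ in $n$ and then letting $\eta\downarrow 0$ gives $\limsup_{n}(\inf f_n)\le f_\infty(x)$; taking the infimum over $x$ yields $\limsup_{n}(\inf f_n)\le\inf f_\infty$.

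For the lower estimate I would fix $\eps>0$ and invoke \eqref{eq:cond-2} to obtain $N$ such that $f_n\ge (f_\infty)_\eps-\eps$ on all of $X$ for every $n\ge N$. Taking infima and using \eqref{eq:inf-eps-invar}, that is $\inf (f_\infty)_\eps=\inf f_\infty$, gives $\inf f_n\ge\inf f_\infty-\eps$ for all $n\ge N$, so that $\liminf_{n}(\inf f_n)\ge\inf f_\infty-\eps$; since $\eps>0$ was arbitrary, $\liminf_{n}(\inf f_n)\ge\inf f_\infty$.

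I do not expect a genuine obstacle here: each direction isolates exactly one of the two hypotheses, and the only mildly delicate point is the lower estimate, which is precisely the reason condition \eqref{eq:cond-2} (rather than mere pointwise convergence) is imposed. It is what rules out the phenomenon of the example $f_n(x)=\max\{-1,x/n\}$ recalled in the introduction, where $\inf f_n=-1\not\to 0=\inf f_\infty$ although the $f_n$ converge pointwise; the infimum-invariance of the lower-envelope operation $f\mapsto f_\eps$ supplied by Lemma~\ref{lem:lem-0} then does the rest.
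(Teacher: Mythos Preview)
Your proof is correct and follows essentially the same two-sided strategy as the paper: the lower estimate via \eqref{eq:cond-2} together with \eqref{eq:inf-eps-invar}, and the upper estimate via pointwise convergence at a near-minimizer of $f_\infty$. The only cosmetic difference is that you take the infimum over all $x$ at the end of the upper estimate rather than fixing a single $\bar x$ with $f_\infty(\bar x)\le\inf f_\infty+\varepsilon$, which is equivalent.
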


	\begin{proof}
				From \eqref{eq:cond-2} and \eqref{eq:inf-eps-invar} it follows that for each $\varepsilon > 0$
		$$
			\inf f_n \ge \inf (f_\infty)_\varepsilon - \varepsilon = \inf f_\infty - \varepsilon,
		$$
		for all $n$ large enough. That is,
		\begin{equation}
			\label{eq:lim-inf-a}
			\liminf _{n\to\infty} (\inf f_n) \ge \inf f_\infty.
		\end{equation}
		On the other hand, fix arbitrary $\varepsilon > 0$ and let $\bar x\in X$ be such that
		$$
			f_\infty(\bar x) \le \inf f_\infty + \varepsilon.
		$$
		From \eqref{eq:cond-2} it follows that  $f_n(\bar x) < f_\infty(\bar x) + \varepsilon$ for all $n$ large enough, so
		$$
			\inf f_n \le f_n(\bar x) \le \inf f_\infty + 2\varepsilon
		$$
		for all $n$ large enough. Therefore,
		\begin{equation}
			\label{eq:lim-inf-b}
			\limsup _{n\to\infty} (\inf f_n) \le \inf f_\infty.
		\end{equation}
		Together \eqref{eq:lim-inf-a} and \eqref{eq:lim-inf-b} prove the claim \eqref{eq:lim-inf}.
	\end{proof}
	
	\begin{lem}\label{lem:4.1}
		Let $(X,d)$ be a metric space. If the  proper and bounded below functions $(f_n)_{n\ge 1}$ satisfy \eqref{eq:cond-1} and \eqref{eq:cond-2} then for each $\varepsilon > 0$
		\begin{equation}
			\label{eq:4.1}
			\forall x\in \Omega_{f_n}(\eps/4) \Rightarrow d(x, \Omega_{f_\infty}(\eps)) \le \eps/2,
		\end{equation}
		for all $n$ large enough. In particular,
		\begin{equation}
			\label{eq:4.1-limiting}
			\lim_{n\to\infty} \diam\bigcup_{k\ge n}\Omega_{f_k}(\eps/4) \le \diam \Omega_{f_\infty}(\eps) + \eps.
		\end{equation}
	\end{lem}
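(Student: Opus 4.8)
The plan is to derive \eqref{eq:4.1} by bridging through the enlarged function $(f_\infty)_{\eps/2}$ and then invoking the second assertion \eqref{eq:om-mu-eps} of Lemma~\ref{lem:lem-0}. Fix $\eps > 0$ and take $x \in \Omega_{f_n}(\eps/4)$, i.e.\ $f_n(x) \le \inf f_n + \eps/4$. First I would apply \eqref{eq:cond-2} with parameter $\eps/4$: there is $N_1$ with $f_n \ge (f_\infty)_{\eps/4} - \eps/4$ for all $n \ge N_1$. Since $\eps/4 \le \eps/2$ and the map $\delta \mapsto (f_\infty)_\delta$ is non-increasing (larger radius means infimum over a larger ball), this gives $(f_\infty)_{\eps/2}(x) \le (f_\infty)_{\eps/4}(x) \le f_n(x) + \eps/4$. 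Combining with the $\eps/4$-minimality of $x$ yields $(f_\infty)_{\eps/2}(x) \le \inf f_n + \eps/2$.

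Next I would control $\inf f_n$ by Lemma~\ref{lem:lim-inf}: pick $N_2$ so that $\inf f_n \le \inf f_\infty + \eps/4$ for $n \ge N_2$, and recall from \eqref{eq:inf-eps-invar} that $\inf (f_\infty)_{\eps/2} = \inf f_\infty$. Then for $n \ge \max\{N_1,N_2\}$,
$$(f_\infty)_{\eps/2}(x) \le \inf (f_\infty)_{\eps/2} + \tfrac{3\eps}{4},$$
so $x \in \Omega_{(f_\infty)_{\eps/2}}(3\eps/4)$. As $3\eps/4 < \eps$, the assertion \eqref{eq:om-mu-eps} of Lemma~\ref{lem:lem-0}, applied to $f_\infty$ with $\delta = \eps/2$ and $\mu = 3\eps/4$, gives $d(x,\Omega_{f_\infty}(\eps)) \le \eps/2$, which is \eqref{eq:4.1}.

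For the ``in particular'' claim \eqref{eq:4.1-limiting} the plan is a plain triangle-inequality estimate. Let $N$ be large enough that \eqref{eq:4.1} holds for all $n \ge N$, so every point of $\bigcup_{k\ge N}\Omega_{f_k}(\eps/4)$ lies within distance $\eps/2$ of the set $\Omega_{f_\infty}(\eps)$, which is nonempty since $f_\infty$ is proper and bounded below. Given two such points $x,y$ and any $\tau > 0$, I would choose near-projections $x',y' \in \Omega_{f_\infty}(\eps)$ with $d(x,x'),\, d(y,y') \le \eps/2 + \tau$, whence $d(x,y) \le \diam \Omega_{f_\infty}(\eps) + \eps + 2\tau$; letting $\tau \downarrow 0$ and taking the supremum over $x,y$ bounds $\diam\bigcup_{k\ge N}\Omega_{f_k}(\eps/4)$ by $\diam\Omega_{f_\infty}(\eps) + \eps$. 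Since the sets $\bigcup_{k\ge n}\Omega_{f_k}(\eps/4)$ decrease with $n$, their diameters are non-increasing, so the limit is dominated by this same bound.

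The genuinely routine part is the constant bookkeeping; the only place needing care is to make sure the parameter $\mu$ at which $x$ lands in $\Omega_{(f_\infty)_{\eps/2}}$ comes out \emph{strictly} below $\eps$ (here $3\eps/4$), since \eqref{eq:om-mu-eps} requires $0 < \mu < \eps$, and to use correctly the monotonicity $(f_\infty)_{\eps/4} \ge (f_\infty)_{\eps/2}$ when passing from the enlargement supplied by \eqref{eq:cond-2} to the one of radius $\eps/2$ that feeds Lemma~\ref{lem:lem-0}. I expect no real obstacle beyond this juggling.
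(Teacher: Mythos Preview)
Your proposal is correct and follows essentially the same route as the paper: use \eqref{eq:cond-2} together with Lemma~\ref{lem:lim-inf} to land $x$ in a sublevel set of an $\eps$-enlargement of $f_\infty$, then invoke \eqref{eq:om-mu-eps}. The only cosmetic difference is that the paper works throughout with $(f_\infty)_{\eps/4}$ (their $\delta=\eps/4$) and thereby actually gets $d(x,\Omega_{f_\infty}(\eps))\le\eps/4$, whereas your detour through the monotonicity step $(f_\infty)_{\eps/4}\ge(f_\infty)_{\eps/2}$ is unnecessary and yields only $\le\eps/2$; either suffices for the stated lemma.
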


	\begin{proof}
		Note that \eqref{eq:4.1} is trivial for $n=\infty$, so we can focus on the natural $n$'s. Also, \eqref{eq:4.1-limiting} follows immediately from \eqref{eq:4.1}.
		
		Let $\eps>0$ be arbitrary and let $\delta = \eps/4$. Lemma~\ref{lem:lim-inf} is applicable here and  \eqref{eq:lim-inf} gives $N\in\mathbb{N}$ such that
		\begin{equation}
			\label{eq:4.1-delta-1}
			\inf f_n < \inf f_\infty + \delta,\quad\forall n\ge N.
		\end{equation}
		From \eqref{eq:cond-2} we can also assume that $N$ is so large that
		\begin{equation}
			\label{eq:4.1-delta-2}
			f_n \ge (f_\infty)_\delta - \delta,\quad\forall n\ge N.
		\end{equation}
		We claim that
		\begin{equation}
			\label{eq:4.1-claim}
			\Omega_{f_n}(\delta) \subset \Omega_{(f_\infty)_\delta}(3\delta),\quad\forall n\ge N.
		\end{equation}
		Indeed, fix a $n>N$ and a $x\in X$ such that $f_n(x) \le \inf f_n + \delta$. Then \eqref{eq:4.1-delta-2} gives
		$$
			(f_\infty)_\delta (x) \le f_n(x) + \delta \le \inf f_n + 2\delta,
		$$
		and \eqref{eq:4.1-delta-1} yields
		$$
			(f_\infty)_\delta (x) \le \inf f_\infty + 3\delta.
		$$
		But $\inf f_\infty = \inf (f_\infty)_\delta$  from \eqref{eq:inf-eps-invar} and so $x\in \Omega_{(f_\infty)_\delta}(3\delta)$, which proves \eqref{eq:4.1-claim}, because $x\in \Omega_{f_n}(\delta)$ was arbitrary.
		
		By applying \eqref{eq:om-mu-eps} to $f_\infty$ with $\mu=3\delta<\eps$, we can derive \eqref{eq:4.1} from \eqref{eq:4.1-claim}.
	\end{proof}
	
	We say that a proper and bounded below function $f:X\to\Ri$ attains its \emph{strong} minimum at $\bar x \in X$ if each minimising sequence converges to $\bar x$, that is,
	$$
		\lim_{n\to\infty} f(x_n) = \inf f\Rightarrow \lim_{n\to\infty} x_n = \bar x.
	$$
	It is clear that if $f$ attains a strong minimum then
	\begin{equation}
		\label{eq:str-min-crit}
		\lim_{\eps\to0}\diam\Omega_f(\eps) \to 0.
	\end{equation}
	
	\begin{lem}
		\label{lem:master-lemma}
		Let $(X,d)$ be a metric space. If the  proper and bounded below functions $(f_n)_{n\ge 1}$ satisfy \eqref{eq:cond-1} and \eqref{eq:cond-2}
		and each $f_n$ attains its strong minimum at $x_n$, then
		\begin{equation}
			\label{eq:min-converge}
			\lim_{n\to\infty} x_n = x_\infty,\quad \lim_{n\to\infty} f_n(x_n) = f_\infty(x_\infty).
		\end{equation}
	\end{lem}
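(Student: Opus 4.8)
The plan is to treat the two assertions of \eqref{eq:min-converge} separately. The value convergence is essentially free: since each $f_n$ attains its (strong) minimum at $x_n$, we have $f_n(x_n) = \inf f_n$ for every $n$ (including $n=\infty$), so the statement $\lim_n f_n(x_n) = f_\infty(x_\infty)$ is literally the conclusion $\lim_n \inf f_n = \inf f_\infty$ of Lemma~\ref{lem:lim-inf}. I would simply cite that lemma.

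The substance lies in proving $x_n \to x_\infty$, and here I would argue straight from the definition of a limit, fixing an arbitrary $\eta > 0$ and producing a threshold $N$ beyond which $d(x_n,x_\infty) < \eta$. First I would exploit that the \emph{limit} function $f_\infty$ attains a strong minimum: by the criterion \eqref{eq:str-min-crit} its $\eps$-minimum sets shrink, so I can fix $\eps > 0$ small enough that $\eps + \diam \Omega_{f_\infty}(\eps) < \eta$. Only after $\eps$ is fixed would I invoke Lemma~\ref{lem:4.1}, which then supplies an $N$ such that \eqref{eq:4.1} holds for all $n\ge N$; that is, every point of $\Omega_{f_n}(\eps/4)$ lies within distance $\eps/2$ of $\Omega_{f_\infty}(\eps)$.

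The remaining step is triangle-inequality bookkeeping. For $n\ge N$ the minimizer $x_n$ trivially lies in $\Omega_{f_n}(\eps/4)$ (as $f_n(x_n)=\inf f_n$), so I can select $z_n\in\Omega_{f_\infty}(\eps)$ with $d(x_n,z_n) < \eps$. Since $x_\infty$, being the minimizer of $f_\infty$, also lies in $\Omega_{f_\infty}(\eps)$, I get $d(z_n,x_\infty)\le \diam \Omega_{f_\infty}(\eps)$, and therefore
$$d(x_n,x_\infty)\le d(x_n,z_n)+d(z_n,x_\infty) < \eps + \diam \Omega_{f_\infty}(\eps) < \eta.$$
As $\eta$ was arbitrary, this yields $x_n\to x_\infty$.

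I expect the only genuinely delicate point to be the ordering of the two $\eps$-scales: one must first pin down $\eps$ from the strong minimality of $f_\infty$ (to control $\diam \Omega_{f_\infty}(\eps)$) and only then extract $N$ from Lemma~\ref{lem:4.1}. It is worth stressing that the driving hypothesis is the strong minimum of the \emph{limit} $f_\infty$; the strong minimality of the individual $f_n$ is used only to guarantee that the minimizers $x_n$ exist and realize $\inf f_n$.
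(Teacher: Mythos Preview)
Your proof is correct and follows essentially the same approach as the paper: both cite Lemma~\ref{lem:lim-inf} for the value convergence, and for $x_n\to x_\infty$ both first use the strong minimality of $f_\infty$ via \eqref{eq:str-min-crit} to pick a small level, then appeal to Lemma~\ref{lem:4.1}. The only cosmetic difference is that the paper quotes the packaged inequality \eqref{eq:4.1-limiting} while you invoke \eqref{eq:4.1} directly and spell out the triangle-inequality step; the content is identical.
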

	\begin{proof}
		The second part of \eqref{eq:min-converge} is just \eqref{eq:lim-inf}, so it follows from Lemma~\ref{lem:lim-inf}.
		
		For the first part, fix $\eps > 0$. From \eqref{eq:str-min-crit} for $f_\infty$ there is $\delta > 0$ such that
		$$
			\diam \Omega_{f_\infty}(\delta) + \delta < \eps.
		$$
		
		Now, this and \eqref{eq:4.1-limiting} of Lemma~\ref{lem:4.1}  imply
		$$
			\lim_{n\to\infty} \diam\bigcup_{k\ge n}\Omega_{f_k}(\delta/4) < \eps,
		$$
		but obviously $x_k\in\Omega_{f_k}(\delta/4)$, so
		$$
			d(x_k,x_\infty) \le \diam\bigcup_{k\ge n}\Omega_{f_k}(\delta/4),\quad \forall k \ge n.
		$$
		Therefore, there is $N\in\mathbb{N}$ such that
		$$
			d(x_k,x_\infty) < \eps,\quad \forall k \ge N.
		$$
The proof is then completed.
	\end{proof}

Let us recall that a function $g:X\to\mathbb{R}$ is \emph{uniformly continuous} if for any $\eps >0$ there exists $\delta >0$ such that $|g(x)-g(y)|<\eps$ whenever $d(y,x)<\delta$.
	
	\begin{lem}
		\label{lem:can-add-uc}
		Let $(X,d)$ be a metric space and let $g:X\to\mathbb{R}$ be uniformly continuous and bounded below.
		
		If the proper and bounded below functions $(f_n)_{n\ge 1}$ satisfy \eqref{eq:cond-2} then so do $(f_n+g)_{n\ge1}$.
	\end{lem}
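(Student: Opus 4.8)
The plan is to note first that the limiting member of the shifted sequence $(f_n+g)_{n\ge1}$ is $f_\infty+g$, which is again proper and bounded below, since $g$ is finite-valued and bounded below. Condition \eqref{eq:cond-2} for $(f_n+g)$ therefore reads $f_n+g\ge(f_\infty+g)_\eps-\eps$ for large $n$, and the task is to deduce it from the analogous statement for $(f_n)$. The natural route is to relate the quantity $(f_\infty+g)_\eps$ to $(f_\infty)_\delta+g$ for a suitably small radius $\delta$, absorbing the discrepancy between them into the uniform continuity of $g$.

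So I would fix $\eps>0$ and use the uniform continuity of $g$ to pick $\delta>0$ so small that simultaneously $\delta\le\eps/2$ and $|g(x)-g(y)|<\eps/2$ whenever $d(x,y)\le\delta$. Applying \eqref{eq:cond-2} for $(f_n)$ to this particular $\delta$ yields an $N\in\mathbb{N}$ with $f_n\ge(f_\infty)_\delta-\delta$ for all $n\ge N$. The key step is then a pointwise comparison: fix $x\in X$ and let $y$ be any point with $d(y,x)\le\delta$. Since $\delta\le\eps$, such a $y$ is admissible in the infimum defining $(f_\infty+g)_\eps(x)$, and the choice of $\delta$ gives $g(x)\ge g(y)-\eps/2$, whence
$$ f_\infty(y)+g(x)\ge f_\infty(y)+g(y)-\eps/2\ge (f_\infty+g)_\eps(x)-\eps/2. $$
Taking the infimum over all admissible $y$ produces $(f_\infty)_\delta(x)+g(x)\ge(f_\infty+g)_\eps(x)-\eps/2$.

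Combining this with $f_n\ge(f_\infty)_\delta-\delta$ for $n\ge N$ and adding $g$ gives, for every $x\in X$,
$$ f_n(x)+g(x)\ge(f_\infty)_\delta(x)+g(x)-\delta\ge(f_\infty+g)_\eps(x)-\eps/2-\delta\ge(f_\infty+g)_\eps(x)-\eps, $$
which is exactly \eqref{eq:cond-2} for $(f_n+g)_{n\ge1}$. The main obstacle is the bookkeeping of the two radii: $\delta$ must be chosen small enough to control the uniform-continuity error by $\eps/2$ and the additive slack $\delta$ itself by $\eps/2$, yet must also satisfy $\delta\le\eps$ so that the smaller ball of radius $\delta$ used in $(f_\infty)_\delta$ is contained in the ball of radius $\eps$ used in $(f_\infty+g)_\eps$; once these constraints are reconciled, the rest is a direct infimum manipulation. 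Note that only \eqref{eq:cond-2} is needed, not the pointwise convergence \eqref{eq:cond-1}.
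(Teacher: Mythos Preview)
Your proof is correct and follows essentially the same route as the paper: choose $\delta\le\eps/2$ via uniform continuity, invoke \eqref{eq:cond-2} at radius $\delta$, and compare $(f_\infty)_\delta+g$ with $(f_\infty+g)_\eps$. The only cosmetic difference is that the paper first bounds $(f_\infty+g)_\delta\le (f_\infty)_\delta+g+\eps/2$ and then uses $(f_\infty+g)_\delta\ge(f_\infty+g)_\eps$, whereas you merge these two steps by passing directly to the $\eps$-infimum.
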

	\begin{proof}
		Fix $\varepsilon > 0$. Since $g$ is uniformly continuous, there is $\delta\in(0,\varepsilon/2)$ such that
		$$
			|g(y)-g(x)| < \varepsilon/2,\quad\forall x,y\in X:\ d(y,x) < \delta.
		$$
		From \eqref{eq:cond-2} there is $N\in\mathbb{N}$ such that
		$$
			f_n \ge (f_\infty)_\delta - \delta,\quad\forall n\ge N.
		$$
		So, for each $x\in X$
		\begin{eqnarray*}
			(f_\infty+g)_\delta(x) &=& \inf\{f_\infty(y) + g(y):\ d(y,x)\le\delta\}\\
			&\le& \inf\{f_\infty(y) + g(x) + \varepsilon/2:\ d(y,x)\le\delta\}\\
			&=& (f_\infty)_\delta(x) + g(x) + \varepsilon/2.
		\end{eqnarray*}
		That is,
		$(f_\infty+g)_\delta \le (f_\infty)_\delta + g + \varepsilon/2$ and, therefore, for all $n\ge N$
		$$
			f_n+g \ge  (f_\infty)_\delta + g - \delta \ge (f_\infty+g)_\delta -\varepsilon/2 - \delta > (f_\infty+g)_\delta -\varepsilon.
		$$
		But $(f_\infty+g)_\delta\ge (f_\infty+g)_\varepsilon$, because $\delta < \varepsilon$, so
		$$
			f_n+g \ge  (f_\infty+g)_\varepsilon -\varepsilon,\quad\forall n\ge N,
		$$
and the claim follows.
	\end{proof}

 \section{Abstract perturbation method}\label{sec:abstract}

The authors of \cite{DGZ-article,DGZ} established not only a variant of Borwein-Priess Smooth Variational Principle, but also a general and flexible scheme for constructing similar variational principles or, what some prefer to call \emph{perturbation methods}. This scheme has been followed e.g. in \cite{dev-pro,iv-zla-jca,pertOrl} and we follow it here as well. Our approach ---  a bit more topological than that of \cite{DGZ-article,DGZ} --- with special attention to the level sets, can be traced back at least to \cite{cho-kend-rev}.

The basic problem is:  given a real Banach space $(E,\|\cdot\|)$ and a function $f:E\to \Ri$,  find a perturbation function $g:E\to \R $   such that the perturbed function  $f+g$ attains its minimum. Of course, the regularity of the perturbation --- continuous, Lipschitz, differentiable, etc. --- matters a lot for the applications.   The  idea to look for a perturbation in a specified Banach space of functions  comes from  the classical work of Deville, Godefroy and Zizler~\cite{DGZ-article,DGZ}. For example, \cite[Lemma 2.5]{DGZ} gives a list of conditions that when satisfied by a Banach space ${\cal P}$ of bounded   continuous functions on $E$ guarantees that  for each proper, lower semicontinuous and bounded below $f:E\to\Ri$, the set of all  $g\in{\cal P}$ such that the perturbed function $f+g$ attains its strong minimum on $E$ is a dense $G_\delta$ subset of ${\cal P}$.

More specifically,
if the  Banach space $E$  admits a Lipschitzian bump function which
is Fr\'echet differentiable (resp. G\^ateaux differentiable) away from the origin, then all functions $g$ on $E$ that are bounded, Lipschitzian and Fr\'echet differentiable
(resp. G\^ateaux differentiable), equipped with the norm $\| g\|  =\max\{ \|g\|_\infty, \|g'\|_\infty\}$
 constitute such a Banach space ${\cal P}_1$. Based on the latter one easily proves a variant of the  Smooth Variational Principle of Borwein and Preiss~\cite{B-P}. To prove the Ekeland Variational Principle~\cite{Ekeland} on arbitrary Banach space $E$, it is enough to consider the space ${\cal P}_2$ of all bounded Lipschitzian functions $g$ on $E$ equipped
with the norm
\[
\| g\| = \sup \{|g(x)|,x\in E\}+\sup \left\{ \frac{|g(x)-g(y)}{\|x-y\|}, x\neq y\right\}.
\]

In fact considering a perturbation space of functions  is a very flexible tool because the space of perturbations  can be chosen in a way which is most appropriate for the problem at hand. The question  for a given function $f:E\to \Ri$ to find a perturbation  $g:E\to \R $ belonging to appropriately selected Banach space of functions   such that the perturbation $f+g$ not only attains its minimum but preserves some distinctive properties of $f$, is explored in a series of papers: \cite{iv-zla-jca,iv-zla-jota,pertOrl}.

Here we will work with a very general definition of a perturbation space on a complete metric space. In a sense it is quite similar of that in  \cite{pertOrl}. Here we need additionally \emph{uniform continuity} to be able to apply Lemma~\ref{lem:can-add-uc}.


\begin{dfn}\label{pert_sp}
Given a complete metric space $(X,d)$, a space
$(\cal{P},\|\cdot\|_{\cal{P}})$ of real uniformly continuous and bounded  on $X$ functions,  is called a \emph{perturbation space} on $X$ if
\begin{itemize}
\item[(i)]   $\cal{P}$ is complete with respect to the norm $\|\cdot\|_{\cal{P}}$ defined on $ \cal{P}$,  that dominates the uniform  convergence on $X$, that is, for some $c>0$
\begin{equation}
  \label{eq:dominate-uniform}
  \sup_{x\in X} |g(x)| \le c \|g\|_{\cal{P}},\quad\forall g\in  \cal{P}.
\end{equation}
In other words, $(\cal{P},\|\cdot\|_{\cal{P}})$ is a Banach space of bounded on $X$ uniformly continuous functions.
\item[(ii)]  For any $\eps >0$ there exists $\delta>0$    such that for any $x\in X$ there exists $g\in \cal{P}$ (depending on $x$) such that
\begin{equation}
  \label{eq:def:perturb:c}
  \|g\|_{\cal{P}}\le \varepsilon,\ x \in \Omega_g(\delta), \text{ and } \diam(\Omega_g(3\delta)) \le \varepsilon.
\end{equation}
\end{itemize}
\end{dfn}

It is routine to check  that the considered above  space  ${\cal P}_1$ (on appropriate Banach space $E$, of course) as well as the space ${\cal P}_2$ on arbitrary Banach space $E$ are perturbation spaces in the sense of the above definition.

We will need further a simple  result from \cite[Lemma 4]{iv-zla-jca}, see also \cite{pertOrl}: for any $f,g:X\to \Ri$, bounded below on $X$, and $\delta >0$,
\begin{equation}
  \label{eq:new-lema4}
  \Omega_f(\delta)\cap \Omega_g(\delta)\ne\varnothing\quad\Longrightarrow\quad
  \Omega_{f+g}(\delta)\subset\Omega_f(3\delta)\cap \Omega_g(3\delta).
\end{equation}

Following the lines of the proofs of \cite[Theorem 2.3]{pertOrl} and \cite[Theorem 2.5]{pertOrl} we get the following
\begin{thm}\label{thm:perturb-min}
  Let $(X,d)$ be a complete metric space and let $(\cal{P},\|\cdot\|_{\cal{P}})$ be a perturbation space on $X$. Let   $f:X\to \Ri$ be a  proper, lower semicontinuous and bounded below function.

  Then the set of all $g\in \cal{P}$ such that  $f+g$ attains its strong minimum  is a dense $G_\delta$ subset of ${\cal P}$. In particular for any $\eps>0$ there exists $g\in {\cal P}$ such that $\|g\|_{\cal P}\le \eps $ and $f+g$ attains its strong minimum.
\end{thm}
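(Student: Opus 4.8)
The plan is to follow the classical Deville--Godefroy--Zizler category scheme. For $n\ge 1$ I would set
$$
U_n := \{ g\in\mathcal{P} :\ \exists\,\delta>0 \text{ with } \diam\Omega_{f+g}(\delta) < 1/n \},
\qquad U := \bigcap_{n\ge1} U_n,
$$
and first identify $U$ with the desired set. Indeed, if $g\in U$ then $\diam\Omega_{f+g}(\delta)\to0$ as $\delta\to0$, so any minimising sequence for $f+g$ is eventually contained in sets of vanishing diameter, hence Cauchy; completeness of $X$ gives a limit $\bar x$, and lower semicontinuity of $f+g$ (a sum of an lsc and a continuous function) forces $(f+g)(\bar x)=\inf(f+g)$, so $\bar x$ is the strong minimum. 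Conversely, \eqref{eq:str-min-crit} shows that attaining a strong minimum implies $g\in U$. It then remains to prove that each $U_n$ is open and dense and to invoke the Baire category theorem, which applies since $\mathcal{P}$ is complete.

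For openness, fix $g\in U_n$ with witness $\delta$. If $\|g-g'\|_{\mathcal{P}}$ is small, then by \eqref{eq:dominate-uniform} we have $\sup_X|g-g'|\le c\|g-g'\|_{\mathcal{P}}=:\eta$, whence $\inf(f+g')$ and $\inf(f+g)$ differ by at most $\eta$ and $\Omega_{f+g'}(\mu)\subset\Omega_{f+g}(\mu+2\eta)$ for every $\mu>0$. Taking $\mu=\delta/2$ and requiring $\eta\le\delta/4$, that is $\|g-g'\|_{\mathcal{P}}\le\delta/(4c)$, gives $\diam\Omega_{f+g'}(\delta/2)\le\diam\Omega_{f+g}(\delta)<1/n$, so $g'\in U_n$.

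Density is the crux. Given $g_0\in\mathcal{P}$ and $\eps>0$, put $h:=f+g_0$, which is proper, lsc and bounded below, and set $\eps':=\min\{\eps,1/(2n)\}$. Apply condition (ii) of Definition~\ref{pert_sp} with $\eps'$ to obtain the corresponding $\delta>0$. Since $h$ is bounded below, choose $x\in\Omega_h(\delta)$, and let $p\in\mathcal{P}$ be the function furnished by \eqref{eq:def:perturb:c} at this particular $x$, so that $\|p\|_{\mathcal{P}}\le\eps'$, $x\in\Omega_p(\delta)$ and $\diam\Omega_p(3\delta)\le\eps'$. Then $x\in\Omega_h(\delta)\cap\Omega_p(\delta)\ne\varnothing$, so \eqref{eq:new-lema4} applied to $h$ and $p$ yields $\Omega_{h+p}(\delta)\subset\Omega_p(3\delta)$, and therefore $\diam\Omega_{f+(g_0+p)}(\delta)=\diam\Omega_{h+p}(\delta)\le\eps'<1/n$. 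Thus $g:=g_0+p\in U_n$ with $\|g-g_0\|_{\mathcal{P}}\le\eps'\le\eps$, proving density. The decisive point — the step I expect to require the most care in the write-up — is to anchor the localised perturbation $p$ at a near-minimiser $x$ of $h=f+g_0$, so that the two $\delta$-minimiser sets genuinely overlap and \eqref{eq:new-lema4} can pin the diameter of $\Omega_{h+p}(\delta)$ down to that of $\Omega_p(3\delta)$.

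Finally, the Baire category theorem gives that $U=\bigcap_n U_n$ is a dense $G_\delta$ subset of $\mathcal{P}$, and by the first paragraph every $g\in U$ makes $f+g$ attain its strong minimum. For the last assertion, applying density of $U$ to the point $0\in\mathcal{P}$ and radius $\eps$ produces $g\in U$ with $\|g\|_{\mathcal{P}}\le\eps$, completing the proof.
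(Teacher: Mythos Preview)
Your proof is correct and follows essentially the same Deville--Godefroy--Zizler category scheme as the paper: your sets $U_n$ coincide with the paper's $M_n$, density is obtained identically by anchoring the bump from axiom~(ii) at a point of $\Omega_{f+g_0}(\delta)$ and applying \eqref{eq:new-lema4}, and only the openness step differs cosmetically (you use the direct inclusion $\Omega_{f+g'}(\mu)\subset\Omega_{f+g}(\mu+2\eta)$ where the paper invokes \eqref{eq:new-lema4} once more). By checking both inclusions $U=\{g:\ f+g\text{ attains strong minimum}\}$ you are in fact slightly more thorough than the paper, which only exhibits $U$ as a dense $G_\delta$ contained in that set.
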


\begin{proof}
	We will use the basic fact that for a proper, lower semicontinuous and bounded below function $f$ on a complete metric space the condition \eqref{eq:str-min-crit} is also sufficient for $f$ to attain a  strong minimum.

 Consider for $n\in\N$ the subset $M_n$ of $\cal{P}$ defined by
\begin{equation}\label{eq:an-def}
M_n:=\left\{ g\in {\cal{P}}:  \exists\ t>0  :\ \diam\left(\Omega_{f+g}(t)\right) < \frac{1}{n}\right\}.
\end{equation}
We will show that $M_n$ is dense and open in $(\cal{P},\|\cdot\|_{\cal{P}})$. Then by Baire Category Theorem
$$
	U := \bigcap_{n=1}^\infty M_n
$$
will be dense in $X$ (it is $G_\delta$ by definition). For each $g\in U$ the definition of $M_n$ shows that \eqref{eq:str-min-crit} is satisfied for $f+g$ and, therefore, $f+g$ attains  strong minimum.

Fix $n\in\N$. Let $g\in M_n$ be arbitrary and let $\beta > 0$ be such that
$$
  \diam\left(\Omega_{f+g}(3\beta)\right) < \frac{1}{n}.
$$
For any $h\in \cal{P}$ such that $\|h\|_{\cal{P}}<\beta/2c$ it follows from \eqref{eq:dominate-uniform} that $h(x) - h(y) < \beta$ for any $x,y\in X$ and, therefore, $\Omega_h(\beta) = X$. From \eqref{eq:new-lema4} for $(f+g)$ and $h$ it follows that
$$
  \Omega_{f+g+h}(\beta) \subset \Omega_{f+g}(3\beta),
$$
thus $\diam (\Omega_{f+g+h}(\beta)) < 1/n$ and $g+h\in M_n$. So, $M_n$ is open.

Let now $h\in \cal{P}$ be arbitrary. Fix an arbitrary $\varepsilon \in (0,1/n)$.
Let $\delta > 0$ be provided by (ii) of Definition~\ref{pert_sp}. Fix
$$
  x \in \Omega_{f+h} (\delta),
$$
and let $g\in \cal{P}$ satisfy \eqref{eq:def:perturb:c}. The latter implies that $x\in \Omega_g(\delta)$. From  \eqref{eq:new-lema4} it follows that
$$
  \Omega_{f+h+g}(\delta) \subset \Omega_g (3\delta) \Longrightarrow \diam (\Omega_{f+h+g}(\delta)) \le \varepsilon.
$$
As $\varepsilon<1/n$, this means that $h+g\in M_n$. Since $\|g\|_{\cal{P}}\le\varepsilon$, see \eqref{eq:def:perturb:c}, the distance from $h$ to $M_n$ is smaller than $\eps$. In other words, $M_n$ is dense in~$\cal{P}$.
\end{proof}

\section{Proof of Theorem~\ref{thm:minconv}}\label{sec:proof}

	From Theorem~\ref{thm:perturb-min} for each $n\ge 1$ there is a dense $G_\delta$ subset $U_n$ of $\cal{P}$ such that for each $g\in U_n$ the function $f_n+g$ attains strong minimum. 	
	Let
	$$
		U := \bigcap_{n\ge1} U_n.
	$$
	By Baire Category Theorem, $U$ is a dense $G_\delta$ subset of $\cal{P}$. Let $g\in U$. This means that all $(f_n+g)_{n\ge 1}$ attain strong minimum. Let $x_n$ be the strong minimum of $f_n+g$. Since $g$ is uniformly continuous, by Lemma~\ref{lem:can-add-uc} the functions $(f_n+g)_{n\ge1}$ satisfy \eqref{eq:cond-2}. Clearly, they satisfy \eqref{eq:cond-1} as well and so, Lemma~\ref{lem:master-lemma} for $(f_n+g)_{n\ge1}$ yields
	$$
		\lim_{n\to\infty} x_n = x_\infty,\quad \lim_{n\to\infty} f_n(x_n) + g(x_n) = f_\infty(x_\infty) + g(x_\infty).
	$$
	But then $g(x_n)\to g(x_\infty)$ and \eqref{eq:continuity-min} is verified.

\bigskip

\textbf{Acknowledgements.}
We wish to express our deep respect  to Dr. Milen Ivanov who pointed  us to this area of research and to give him our thanks  for his guidance  and for the invaluable help and support.

\end{document}